\crefname{lemma}{Lemma}{Lemmas}
\crefname{corollary}{Corollary}{Corollaries}
\crefname{theorem}{Theorem}{Theorems}
\crefname{equation}{Equation}{Equations}
\crefname{example}{Example}{Examples}
\crefname{section}{Section}{Sections}
\crefname{subsection}{Section}{Sections}
\newcommand{\defn}[1]{{\bf{#1}}}
\def\Powerset{\mathcal{P}}
\newcommand{\Erdos}{\text{Erd\H{o}s}}
\DeclareMathOperator{\dom}{dom}
\DeclareMathOperator{\range}{ran}
\DeclareMathOperator{\Rel}{Rel}
\DeclareMathOperator{\arity}{ar}
\DeclareMathOperator{\Turingop}{T}
\newcommand{\TuringEq}{\EM{\equiv_{\Turingop}}}
\newcommand{\TuringLeq}{\EM{\leq_{\Turingop}}}
\DeclareDocumentCommand{\TuringJump}{d[]}
{
\IfNoValueTF{#1}
	{
	\EM{\mbf{0'}}
	}
	{
	\EM{\mbf{#1'}}
	}
}
\DeclareMathOperator{\Sunflowerop}{Sun}
\DeclareDocumentCommand{\SunflowerComp}{d[]}
{
\IfNoValueTF{#1}
	{
	\EM{\Sunflowerop}
	}
	{
	\EM{\Sunflowerop(#1)}
	}
}
\DeclareMathOperator{\FinSetCompop}{FS}
\DeclareDocumentCommand{\FinSetComp}{d[]}
{
\IfNoValueTF{#1}
	{
	\EM{\FinSetCompop}
	}
	{
	\EM{\FinSetCompop(#1)}
	}
}
\DeclareDocumentCommand{\FinSetSeqComp}{d[]}
{
\IfNoValueTF{#1}
	{
	\EM{\FinSetCompop_{<\w}}
	}
	{
	\EM{\FinSetCompop_{<\w}(#1)}
	}
}
\DeclareDocumentCommand{\dual}{d()}
{
\IfNoValueTF{#1}
	{
	\EM{\hat{\ }}
	}
	{
	\EM{\hat{#1}}
	}
}
\DeclareDocumentCommand{\SizeAS}{d()}
{
\IfNoValueTF{#1}
	{
	\EM{|\cdot|}
	}
	{
	\EM{|#1|}
	}
}
\DeclareDocumentCommand{\compcK}{d[]}
{
\IfNoValueTF{#1}
	{
	\EM{\mathbb{K}}
	}
	{
	\EM{\mathbb{K}[#1]}
	}
}
\DeclareDocumentCommand{\AgeK}{d[]}
{
\IfNoValueTF{#1}
	{
	\EM{\mathbf{K}}
	}
	{
	\EM{\mathbf{K}[#1]}
	}
}
\DeclareDocumentCommand{\Rel}{d[]}
{
\IfNoValueTF{#1}
	{
	\EM{\mathcal{R}}
	}
	{
	\EM{\mathcal{R}_{#1}}
	}
}
\DeclareDocumentCommand{\Func}{d[]}
{
\IfNoValueTF{#1}
	{
	\EM{\mathcal{F}}
	}
	{
	\EM{\mathcal{F}_{#1}}
	}
}
\DeclareDocumentCommand{\ar}{d[]}
{
\IfNoValueTF{#1}
	{
	\EM{\arity}
	}
	{
	\EM{\arity_{#1}}
	}
}
\DeclareDocumentCommand{\Fn}{d<> d[] d()}
{
\IfNoValueTF{#3}
	{
	\EM{\textrm{Fn}(#1, #2)}
	}
	{
	\EM{\textrm{Fn}(#1, #2, #3)}
	}
}
\DeclareMathOperator{\Sym}{Sym}
\DeclareDocumentCommand{\Perm}{d()}
{
\EM{\Sym(#1)}
}
\DeclareDocumentCommand{\tdcl}{d[] d()}
{
\IfNoValueTF{#1}
{
    \EM{\textbf{term}(#2)}
}
{
    \EM{\textbf{term}_{#1}(#2)}
}
}
\DeclareDocumentCommand{\gdcl}{d[] d()}
{
\IfNoValueTF{#1}
{
    \EM{\textrm{gcl}(#2)}
}
{
    \EM{\textrm{gcl}_{#1}(#2)}
}
}
\DeclareDocumentCommand{\qftp}{d[] d()}
{
\IfNoValueTF{#1}
{
    \EM{\quantfreetp(#2)}
}
{
    \EM{\quantfreetp_{#1}(#2)}
}
}
\DeclareMathOperator{\quantfreetp}{qtp}
\DeclareDocumentCommand{\Closure}{d<> d()}
{
\IfNoValueTF{#1}
{
    \EM{\textrm{cl}(#2)}
}
{
    \EM{\textrm{cl}_{#1}(#2)}
}
}
\DeclareDocumentCommand{\ClosureMap}{d[] d()}
{
\EM{\textrm{clMap}(#1, #2)}
}
\DeclareDocumentCommand{\CHP}{d()}
{
\IfNoValueTF{#1}
{
    \textrm{(CHP)}
}
{
    \textrm{(\EM{#1}-CHP)}
}
}
\DeclareDocumentCommand{\CJEP}{d()}
{
\IfNoValueTF{#1}
{
    \textrm{(CJEP)}
}
{
    \textrm{(\EM{#1}-CJEP)}
}
}
\newcommand{\dotminus}{\mathbin{\text{\@dotminus}}}
\newcommand{\@dotminus}{%
  \ooalign{\hidewidth\raise1ex\hbox{.}\hidewidth\cr$\m@th-$\cr}%
}
\def\w{\EM{\omega}}
\def\^{\EM{{}^{\And}}}
\def\And{\EM{\wedge}}
\def\<{\EM{\langle}}
\def\>{\EM{\rangle}}
\def\nl{\newline}
\def\EM#1{\ensuremath{#1}}
\def\mbf#1{\EM{\mathop{\pmb{#1}}}}
\def\ul#1{\underline{#1}}
\def\st{\,:\,}
\def\:{\colon}
\providecommand{\dotdiv}{% Don't redefine it if available
  \mathbin{% We want a binary operation
    \vphantom{+}% The same height as a plus or minus
    \text{% Change size in sub/superscripts
      \mathsurround=0pt % To be on the safe side
      \ooalign{% Superimpose the two symbols
        \noalign{\kern-.35ex}% but the dot is raised a bit
        \hidewidth$\smash{\cdot}$\hidewidth\cr % Dot
        \noalign{\kern.35ex}% Backup for vertical alignment
        $-$\cr % Minus
      }%
    }%
  }%
}
\DeclareDocumentCommand{\RightJustify}{m}{\hspace*{\fill}\mbox{#1}\penalty-9999\relax}
\DeclareDocumentCommand{\DeclareCounter}{m}%
		\newcounter{#1}%
\DeclareDocumentCommand{\MyQED}{}{\qed}
\noindent\IfNoValueTF{#1}
{\emph{Proof.\!\!}}
{\emph{Proof\ #1.\ }}
\DeclareDocumentCommand{\ProofLabel}{}{%
%
% Increment the counter
\addtocounter{ProofLabelcOUntEr}{1}
%Create the label
\label{cUrrEntProoflAbEl\arabic{ProofLabelcOUntEr}}
}
\DeclareDocumentCommand{\ProofRef}{D<>{1}}
\DeclareDocumentCommand{\ProofCref}{D<>{1}}
\def\TheoremDepth{section}
\DeclareDocumentCommand{\DeclareTheorem}{m o m o}{%
%
% Variables
% #1: Environment's name
% #2: Sibling counter. Note this should have vArIAblE after the name when used. 
% #3: Label
% #4: Parent counter
%
% This declares a theorem style environment \begin{#1}\end{#1} which is numbered and \begin{#1*}\end{#1*} which isn't. 
%
% This first calls \newtheorem on #1vArIAblE and #vArIAblE* and then creates the environments using them.  
%
%If 5 is empty then also call ProofLabel (to set a proof label). Otherwise it calls SubproofLabel (to set a subproof lable)

%Declare #1vArIAblE newtheorems
\IfNoValueTF{#4}
	{%
	\IfNoValueTF{#2}
		{%
		\newtheorem{#1vArIAblE}{#3}
		}
		{%
		\newtheorem{#1vArIAblE}[#2vArIAblE]{#3}
		}
	}
	{%
	\newtheorem{#1vArIAblE}{#3}[#4]%
	}
\newtheorem*{#1vArIAblE*}{#3}

%Declare the #1 environment. 
\DeclareDocumentEnvironment{#1}{o o}
% #1: This is an argument to be passed to \begin{#1vArIAblE}
% #2: This is the spacing. 

	{%BEGIN
	%Add spacing if there is a second argument 
	\IfValueT{##2}%
		{
		\begin{spacing}{##2}
		}
	%Begin the theorem environment 
	%See if there is anything to pass to it. 
	\IfValueTF{##1}
		{
		\begin{#1vArIAblE}[##1]
		}
		{
		\begin{#1vArIAblE}
		}
%	
	% Create a label for this theorem from ProofLabelcOUntEr.
	\ProofLabel
	}
	{%END
	%End spacing if there is a second argument 
	\IfValueT{##2}%
		{
		\end{spacing}{##2}
		}
	%End the theorem environment 
	\end{#1vArIAblE}
	}

%Declare the #1 environment. 
\DeclareDocumentEnvironment{#1*}{o o}
% #1: This is an argument to be passed to \begin{#1vArIAblE}
% #2: This is the spacing. 

	{%BEGIN
	%Add spacing if there is a second argument 
	\IfValueT{##2}%
		{
		\begin{spacing}{##2}
		}
	%Begin the theorem environment 
	%See if there is anything to pass to it. 
	\IfValueTF{##1}
		{
		\begin{#1vArIAblE*}[##1]
		}
		{
		\begin{#1vArIAblE*}
		}
	}
	{%END
	%End spacing if there is a second argument 
	\IfValueT{##2}%
		{
		\end{spacing}{##2}
		}
	%End the theorem environment 
	\end{#1vArIAblE*}
	}
}
\theoremstyle{plain}
\theoremstyle{definition}
\theoremstyle{remark}
\begin{document}

\title{Computability of Countable Sunflowers}

\begin{abstract}
We provide a characterization of when a countably infinite set of finite sets contains an infinite sunflower. We also show that the collection of such sets is Turing equivalent to the set of programs such that whenever the program converges it returns the code of a program with finite range. 
\end{abstract}

\author{Nathanael Ackerman}
\address{Harvard University,
Cambridge, MA 02138, USA}
\email{nate@aleph0.net}

\author{Leah Karker}
\address{Providence College, RI 02918 USA}
\email{mkarker@providence.edu}

\author{Mostafa Mirabi}
\address{The Taft School, Watertown, CT 06795, USA}
\email{mmirabi@wesleyan.edu}

\subjclass[2020]{03E05,\ 03D80}

\keywords{Sunflower Lemma, Sunflower Conjecture, $\Delta$-system, Computability}

\maketitle

%%%%  %%%%  %%%%  %%%% %%%%  %%%%  %%%%  %%%%
\section{Introduction}
%%%%  %%%%  %%%%  %%%% %%%%  %%%%  %%%%  %%%%

In combinatorial set theory a \emph{sunflower}, or a \emph{$\Delta$-system}, is a collection of sets any two pairs of which have a common intersection. The existence of large sunflowers holds significance in various fields, including the study of circuit lower bounds, matrix multiplication, pseudo-randomness, and cryptography. For an overview of the connections to computer science see \cite{MR4334977}.

Furthermore, the existence of large sunflowers also has applications in mathematical logic, including in the study of forcing large generic structures. For further insights, you may refer to works such as \cite{Golshani}, \cite{Cohen}, and \cite{Cohen-Generic-with-Functions_AGM}. 

Among the most important results regarding sunflowers are \Erdos\ and Rado's ``Sunflower Lemma'' and the ``$\Delta$-systems Lemma'' of set theory.

\begin{lemma}[Sunflower Lemma \cite{Erdos-Rado}]
\label{Sunflower lemma}
Let $k, n \in \w$ with $k \geq 3$. Any set $S \subseteq \Powerset_{n}(\w)$ with $|S| \geq k!(n-1)^k$ contains a sunflower of size $n$. 
\end{lemma}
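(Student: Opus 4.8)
The plan is to prove the bound by induction on $k$, the parameter controlling the size of the members of $S$: I will show that any family of subsets of $\w$ each of cardinality at most $k$ with at least $k!(n-1)^k$ elements contains a sunflower with $n$ petals, i.e.\ $n$ sets whose pairwise intersections all coincide in a common core. Carrying the induction over every $k \in \w$ (not merely $k \ge 3$) lets the recursion bottom out cleanly: in the base case the members of $S$ are singletons or the empty set, and since distinct such sets are automatically pairwise disjoint, any $n$ of them form a sunflower with empty core, so it suffices that the counting threshold provides $n$ distinct sets.

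For the inductive step I would select a \emph{maximal} subfamily $A_1, \dots, A_t \in S$ of pairwise disjoint sets and split into two cases according to the size of $t$. If $t \ge n$, then $A_1, \dots, A_n$ are already pairwise disjoint and hence form a sunflower with empty core, and we are done. If instead $t < n$, put $Y = A_1 \cup \dots \cup A_t$; since each $A_i$ has at most $k$ elements and $t \le n-1$, we get $|Y| \le (n-1)k$, and by maximality of the disjoint subfamily every member of $S$ must meet $Y$.

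The heart of the argument is then a pigeonhole step: each of the $\ge k!(n-1)^k$ sets in $S$ contains one of the $\le (n-1)k$ points of $Y$, so some fixed $y \in Y$ lies in at least $k!(n-1)^k/((n-1)k) = (k-1)!(n-1)^{k-1}$ members of $S$. Deleting $y$ from each of these yields a family $S_y = \{A \setminus \{y\} : A \in S,\ y \in A\}$ of sets of size at most $k-1$ with $|S_y| \ge (k-1)!(n-1)^{k-1}$, to which the inductive hypothesis applies, producing a sunflower with $n$ petals and some core $C$ inside $S_y$; restoring $y$ to each petal recovers a sunflower with $n$ petals in $S$ with core $C \cup \{y\}$.

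The step I expect to demand the most care is this final bookkeeping: one must confirm that dividing the threshold $k!(n-1)^k$ by the worst-case size $(n-1)k$ of $Y$ lands exactly on the smaller threshold $(k-1)!(n-1)^{k-1}$ --- which is precisely the point at which the particular shape of the Erd\H{o}s--Rado bound is used --- and that reattaching the pinned point $y$ genuinely preserves the sunflower property, the petals still sharing the common core $C \cup \{y\}$. Everything else, including the disjoint-versus-overlapping dichotomy, is routine once the maximal disjoint subfamily has been fixed.
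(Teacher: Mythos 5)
The paper offers no proof of this lemma --- it is quoted as a classical result of Erd\H{o}s--Rado --- so there is no in-paper argument to compare against; I can only assess your proof on its own terms, and it is the standard argument, correct in substance. You induct on the size bound $k$, take a maximal pairwise-disjoint subfamily, and split: either it already has $n$ members (a sunflower with empty core), or its union $Y$ has at most $(n-1)k$ points and, by maximality, meets every member of $S$; pigeonholing then pins a point $y$ lying in at least $k!(n-1)^k/((n-1)k)=(k-1)!(n-1)^{k-1}$ members, deleting $y$ drops the size bound to $k-1$, and reattaching $y$ to the petals of the sunflower obtained inductively adds $y$ to the core. Two small points deserve care. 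First, the non-strict threshold $|S|\geq k!(n-1)^k$ does not quite close your induction: at $k=1$ it guarantees only $n-1$ singletons, one short of a sunflower of size $n$ (and at $k=0$ only the empty set). The classical statement uses the strict inequality $|S|>k!(n-1)^k$, under which the pigeonhole yields strictly more than $(k-1)!(n-1)^{k-1}$ sets containing $y$ and the base case provides $n$ distinct singletons; you should either adopt the strict form or start the induction at $k=1$ with an adjusted constant. Second, the lemma as printed has its parameters arranged oddly ($S\subseteq \Powerset_{n}(\w)$, threshold $k!(n-1)^k$, sunflower of size $n$), whereas you prove the standard form --- sets of cardinality at most $k$, sunflower with $n$ petals --- which is surely what is intended, but your statement and the printed one do not literally coincide, and it is worth saying so explicitly rather than silently correcting it.
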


\begin{lemma}[$\Delta$-System Lemma]
Suppose $B \subseteq \Powerset_{<\w}(\kappa)$ where $\kappa$ is an uncountable cardinal. Then there is a $\Delta$-system $B_0 \subseteq B$ with $|B_0| = \kappa$. 
\end{lemma}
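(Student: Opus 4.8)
The plan is to run the classical two-case transfinite recursion after first reducing to a family all of whose members have the same finite size. I note at the outset that the argument requires $\kappa$ to be regular; for singular $\kappa$ the statement fails, so I read the intended hypothesis as $\kappa$ uncountable and regular (e.g.\ $\kappa = \aleph_1$). Since there are exactly $\kappa$ finite subsets of an infinite $\kappa$, I also take the standing assumption $|B| = \kappa$, as otherwise the conclusion $|B_0| = \kappa$ cannot hold.

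First I would reduce to uniform size. Writing $B = \bigcup_{n \in \w} B_n$ with $B_n = \{x \in B : |x| = n\}$, regularity gives $\cofinal(\kappa) > \w$, so a union of countably many sets each of size ${<}\,\kappa$ cannot have size $\kappa$; hence some $B_n$ has $|B_n| = \kappa$. Replacing $B$ by this $B_n$, I may assume every member has a fixed size $n \geq 1$, and I then induct on $n$.

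The base case $n = 1$ is immediate: the members are distinct singletons, hence pairwise disjoint, so $B$ is already a $\Delta$-system with empty root. For the inductive step I split on whether some point is popular. If there is $a \in \kappa$ lying in $\kappa$-many members, set $B' = \{x \in B : a \in x\}$; the map $x \mapsto x \setminus \{a\}$ is injective on $B'$, producing $\kappa$-many sets of size $n-1$, to which the inductive hypothesis applies, yielding a $\Delta$-system with some root $r$, and reinserting $a$ gives a $\Delta$-system inside $B$ of size $\kappa$ with root $r \cup \{a\}$. Otherwise every $a \in \kappa$ lies in ${<}\,\kappa$ members, and I build a pairwise disjoint family $\{x_\alpha : \alpha < \kappa\}$ by recursion: given $x_\beta$ for $\beta < \alpha$, the set $F = \bigcup_{\beta < \alpha} x_\beta$ has size ${<}\,\kappa$, each of its points meets ${<}\,\kappa$ members of $B$, so the members meeting $F$ number ${<}\,\kappa$, and since $|B| = \kappa$ I may choose $x_\alpha$ disjoint from $F$; the result is a $\Delta$-system with empty root.

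The step to watch is the repeated use of regularity: both the reduction to a fixed size and the two counting estimates in the disjoint-family case rely on the fact that a union of fewer than $\kappa$ sets, each of size less than $\kappa$, again has size less than $\kappa$. This is exactly the property that distinguishes the regular case, and isolating it is the main (and only real) obstacle in the argument.
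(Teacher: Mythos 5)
The paper does not prove this lemma; it is quoted as classical background (the $\Delta$-System Lemma of \Erdos--Rado/set-theoretic folklore), so there is no in-paper argument to compare against. Your proof is the standard two-case transfinite recursion and it is correct: the reduction to uniform size via $\cofinal(\kappa) > \w$, the induction on $n$ with the ``popular point'' case and the disjoint-family case, and the counting estimates all go through for regular uncountable $\kappa$. You are also right that the statement as printed is missing hypotheses: one needs $|B| = \kappa$ (otherwise $|B_0| = \kappa$ is impossible), and one needs $\kappa$ regular --- for singular $\kappa$ the lemma genuinely fails (e.g.\ for $\kappa = \aleph_{\w_1}$ take $B = \bigcup_{i < \w_1} \{\{x_i, \alpha\} \st \alpha \in A_i\}$ with the $x_i$ distinct, the $A_i$ pairwise disjoint of size $\aleph_{i+1}$ and avoiding the $x_j$; every $\Delta$-subsystem then has size at most $\max(\aleph_1, \aleph_{i+1})$ for some $i$, hence less than $\kappa$). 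Flagging and repairing that is a genuine improvement on the statement rather than a gap in your argument.
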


Both of these lemmas show that, provided that we have a ``large enough'' collection of finite sets we must have a ``large'' sunflower. The Sunflower Lemma shows that, provided all sets have the same size, any sufficiently large finite collection of sets must have a large finite sunflower. The $\Delta$-Systems Lemma shows that any uncountable collection of finite sets must have an uncountable sunflower. 

Together these results show that if our collection of sets is large and finite or uncountable then it must contain a sunflower. However, these results do not address the scenario when the collection of finite sets is countable. And in fact it is easy to see that the collection of countable ordinals $\{[n]\}_{n \in \w}$ is a countably infinite set with no sunflower of size $3$. 

In this paper, we consider the question ``When does a countably infinite collection of finite sets have an infinite sunflower?''. Specifically, for any infinite collection of finite sets and for each $n \in \w$ we can define a tree of sunflowers which contain a set of size $n$. We will then show that the collection contains a sunflower if and only if one such tree is infinite. 

In the course of characterizing those collections of finite sets which contain a sunflower we will also give a computability theoretic bound on how complicated a sunflower must be.

%%%%  %%%%  %%%%  %%%%  %%%%  %%%%
\subsection{Notation}
%%%%  %%%%  %%%%  %%%%  %%%%  %%%%

We let $\Powerset_{<\w}(X)$ be the collection of finite subsets of $X$. For $n \in \w$ we let $\Powerset_{n}(X)$ be the collection of subsets of $X$ of size $n$. If $f$ is a function we let $\dom(f)$ denote its domain and $\range(f)$ denote its range. 

If $X$ is a computable structure, e.g. $\w$, $\w^n$, a computable subset of $\w$, etc. then we fix encodings of $\Powerset_{<\w}(X)$ such that from an encoding for a set $Y$ both the relation $y \in Y$ and $|Y| = n$ are computable for $n\in \w$ and $y \in X$. We say a partial function $f$ \defn{encodes} a subset of $\Powerset_{<\w}(X)$ if the domain of $f$ is a subset of $\w$ and the range of $f$ is contained in $\Powerset_{<\w}(X)$. 

We fix a computable enumeration of the partial computable functions and for $e \in \w$ we let $\{e\}$ be the $e$th partial computable function. Suppose $X \subseteq \w$. We let $\TuringJump[X]$ be the Turing jump of $X$. If $X, Y \subseteq \w$ we say $X \TuringEq Y$ if $X$ and $Y$ have the same Turing degrees and $X \TuringLeq Y$ if $X$ is Turing reducible to $Y$.

We let 
\[
\FinSetComp[X] = \{e \in \w \st \range(\{e\}^X)\text{ is finite}\}, 
\]
i.e. the collection of programs with oracle $X$ whose range is finite. 

If $X \subseteq \w$ we let 
\[
\FinSetSeqComp[X] = \Big\{e \in \w \st (\forall n \in \w)\, \Big(\{e\}^X(n)\! \downarrow \text{ implies } \{e\}^X(n) \in \FinSetComp[X]\Big)\Big\}.
\]

Intuitively, $e \in \FinSetSeqComp[X]$ if $e$ outputs, with oracle $X$, codes for a sequence of functions and, for each output, the range of the function is finite. In particular by letting $e$ output the constant value $e^*$ we can see that $\FinSetComp[X]$ is computable from $\FinSetSeqComp[X]$. 

Note that $\FinSetComp[X]$ is a $\Sigma^0_2(X)$ set and so $\FinSetSeqComp[X]$ is a $\Pi^0_3(X)$ set.

\begin{definition}
Suppose $X$ is a set. A \defn{sunflower} on $X$ is a subset $X_0 \subseteq \Powerset_{<\w}(X)$ such that 
\[
(\exists r)\, (\forall p, q \in X)\, p \neq q \rightarrow p \cap q = r.
\]
We say a set $X$ \defn{contains} a sunflower if there is an $X_0 \subseteq X$ which is a sunflower. 
\end{definition}

%%%%  %%%%  %%%%  %%%%  %%%%  %%%%  %%%%  %%%%
\section{Constant Set Size}
%%%%  %%%%  %%%%  %%%%  %%%%  %%%%  %%%%  %%%%

We now consider countably infinite sunflowers of sets of a fixed size. We first note that from a computability standpoint considering sets of a bounded size and considering sets of a constant size are essentially the same. 

\begin{lemma}
Suppose $X \subseteq \Powerset_{<\w}(Y)$ is such that 
\[
(\exists n\in \w)(\forall x \in X)\, |x| \leq n. 
\]
Then there is a set $X^* \subseteq \Powerset_{n}(Y \times \{0\} \cup \w \times \{1\})$ and a bijection $i\:X \to X^*$ such that 
\begin{itemize}
\item $X^*$ and $i$ are computable from $X$, 

\item For any $x \in X$, $\{(a, 0) \st a \in x\} =  i(x) \cap (Y \times \{0\})$, 

\item For all $X_0 \subseteq X$, $X_0$ is a sunflower if and only if $i``[X_0]$ is a sunflower and in particular for any $x, y \in X$, $i(x) \cap i(y) = \{(a, 0) \st a \in x \cap y\}$. 

\end{itemize}
\end{lemma}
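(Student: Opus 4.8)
The plan is to pad each set $x \in X$ up to size exactly $n$ using fresh elements drawn from $\w \times \{1\}$, chosen so that the padding blocks of distinct sets are always disjoint; this leaves all pairwise intersections living on the $Y \times \{0\}$ side untouched and so transports the sunflower structure unchanged. Concretely, fix a computable pairing function $\langle \pars, \pars \rangle$ on $\w$ and, for $x \in X$, let $c_x \in \w$ be the least code of $x$ under the fixed encoding of $\Powerset_{<\w}(Y)$ (the least code is computable from any code, since equality of finite sets is decidable from codes). Define
\[
i(x) = \{(a, 0) \st a \in x\} \cup \{(\langle c_x, j\rangle, 1) \st 0 \leq j < n - |x|\},
\]
and set $X^* = i``[X]$. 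As $|x| \leq n$, the second block contributes exactly $n - |x|$ elements, all in $\w \times \{1\}$ and hence disjoint from the first block, so $|i(x)| = n$ and $X^* \subseteq \Powerset_{n}(Y \times \{0\} \cup \w \times \{1\})$.

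First I would dispatch the two elementary clauses. Since every padding element lies in $\w \times \{1\}$, we get $i(x) \cap (Y \times \{0\}) = \{(a, 0) \st a \in x\}$, which is the second bullet and also shows $i$ is injective, as the $Y \times \{0\}$ part of $i(x)$ recovers $x$; thus $i \colon X \to X^*$ is a bijection. For the intersection formula with $x \neq y$, note $c_x \neq c_y$, so injectivity of the pairing function makes the two padding blocks disjoint, and since the padding avoids $Y \times \{0\}$ entirely, the only overlap of $i(x)$ and $i(y)$ is between their $Y \times \{0\}$ parts, giving $i(x) \cap i(y) = \{(a, 0) \st a \in x \cap y\}$.

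The sunflower equivalence then falls out of this formula. If $X_0$ is a sunflower with core $r$, then $i(p) \cap i(q) = \{(a, 0) \st a \in r\}$ for all distinct $p, q \in X_0$, so $i``[X_0]$ is a sunflower; conversely, if $i``[X_0]$ has core $r^*$, then $r^* \subseteq Y \times \{0\}$ and $p \cap q = \{a \st (a, 0) \in r^*\}$ is constant over distinct $p, q \in X_0$, with the cases $|X_0| \leq 1$ vacuous on both sides. For the computability clause I would observe that $i(x)$ is computed from a code of $x$ together with $n$, using the pairing function and the computability of $|x|$ from the code, while membership of a size-$n$ set $z$ in $X^*$ is decided with oracle $X$ by forming the candidate $x = \{a \st (a, 0) \in z\}$, testing $x \in X$, and checking that $z \cap (\w \times \{1\})$ is exactly the prescribed padding block for $x$.

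I do not expect a genuine obstacle: the single idea that carries the argument is to index the padding by $c_x$, which guarantees that distinct sets never share a padded element and thereby decouples the combinatorics of the intersections from the bookkeeping needed to reach the uniform size $n$. The only point demanding a little care is that the fixed encoding let us read off $c_x$, $|x|$, and the membership relation $a \in x$ uniformly from a code, but this is exactly what the encoding conventions of the previous section supply.
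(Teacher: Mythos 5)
Your proposal is correct and follows essentially the same route as the paper: pad each $x$ up to size $n$ with fresh elements of $\w \times \{1\}$ chosen so that distinct sets receive disjoint padding, which leaves all pairwise intersections inside $Y \times \{0\}$ and hence preserves the sunflower structure in both directions. The paper's proof is a two-sentence sketch of this idea; your version simply makes the disjointness of the padding blocks explicit via the code $c_x$ and a pairing function, and verifies the three bullet points in detail.
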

\begin{proof}
Note that $Y \times \{0\} \cup \w \times \{1\}$ is the disjoint union of $Y$ and $\w$. With this intuition we enumerate the elements of $X$ and for each element $x \in X$ with $|x| < n$ we add $n - |x|$ new elements from $\w$ to $i(x)$. Further we add elements such that if $x, y \in X$ are distinct then no elements added to $i(x)$ are also added to $i(y)$. 
\end{proof}

We now turn our attention to infinite subsets of $\Powerset_{n}(\w)$.

\begin{theorem}
\label{Sunflowers on sets of the same size}
Suppose 
\begin{itemize}
\item $n \in \w$, 

\item $S \subseteq \Powerset_{n}(\w)$ is infinite.
\end{itemize}
Then there is a sunflower $S_0 \subseteq S$ which infinite is computable in $\FinSetComp[S]$. 

Furthermore there is an $e \in \w$ such that for any encoding $Z$ of $\FinSetComp[S]$, $\{e\}^{Z}$ is a total function which is an encoding of a sunflower contained in $S$. 
\end{theorem}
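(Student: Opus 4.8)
The plan is to reduce the whole problem to deciding, for candidate cores $r$, whether a single \emph{deterministic greedy} sunflower built from $r$ is infinite. This is a $\Pi^0_2(S)$ question, hence answerable by one membership query to $\FinSetComp[S]$, and it lets me avoid the genuinely harder task of deciding whether some element lies in infinitely many members of $S$. Concretely (and assuming $n \geq 1$, since $\Powerset_0(\w)$ is finite), for a finite set $r$ put $S_r = \{x \in S : r \subseteq x\}$, enumerate $S_r$ in increasing code order, and define the greedy sunflower $G_r$ with core $r$: let $y_1$ be the first element of $S_r$, and having chosen $y_1, \dots, y_k$ with pairwise intersection $r$, let $y_{k+1}$ be the first $x \in S_r$ with $x \cap (y_1 \cup \cdots \cup y_k) = r$. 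This is computable from $S$ and either runs forever (an infinite sunflower) or gets stuck. I need two facts. First, a \emph{good core} exists: starting from $r = \emptyset$ (so $S_r = S$ is infinite) and repeatedly absorbing any $a$ with $S_{r \cup \{a\}}$ infinite yields, after at most $n-1$ steps (the process keeps $S_r$ infinite, so $|r|$ stays below $n$), a finite $r$ with $S_r$ infinite and $S_{r \cup \{a\}}$ finite for all $a \notin r$. Second, if $r$ is a good core then $G_r$ is infinite: at each stage the sets of $S_r$ meeting the finite set $(y_1 \cup \cdots \cup y_k)\setminus r$ form a finite union $\bigcup_a S_{r \cup \{a\}}$ of finite sets, leaving infinitely many valid choices. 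Hence at least one finite $r$ has $G_r$ infinite.

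Next I would turn ``$G_r$ is infinite'' into a single oracle query. Define an index $q_r$, uniformly computable from $r$, so that $\{q_r\}^S(k)$ outputs the $k$-th petal of $G_r$; then $\range(\{q_r\}^S)$ is the set of petals, which is infinite exactly when $G_r$ is, so $G_r$ is infinite iff $q_r \notin \FinSetComp[S]$. Likewise, membership in $S$ reduces to $\FinSetComp[S]$: let $\{e_x\}^S$ enumerate all of $\w$ if $x \in S$ and diverge otherwise, so $x \in S$ iff $e_x \notin \FinSetComp[S]$, giving $S \TuringLeq \FinSetComp[S]$. Thus every $S$-computation above can be carried out with oracle $\FinSetComp[S]$.

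Assembling the algorithm: enumerate the finite subsets $r^{(0)}, r^{(1)}, \dots$ of $\w$, and for each $j$ use one query to test whether $G_{r^{(j)}}$ is infinite, stopping at the first $j$ for which it is. This search halts because a good core exists and yields an infinite $G_r$, and for the chosen $r$ every per-petal search halts (we have certified $G_r$ infinite). Output $S_0 := G_r$: it is an infinite sunflower contained in $S$, and it is computable from $\FinSetComp[S]$, since its petals can be enumerated and membership $x \in S_0$ decided by simulating the greedy process until $x$ is either selected or excluded (this terminates once all smaller-coded sets are resolved). Because forming $q_r$, the queries, the recovery of $S$ from the oracle, and the emission of petals are all uniform and independent of $S$, the same index $e$ works for any encoding $Z$ of $\FinSetComp[S]$, making $\{e\}^Z$ a total function encoding a sunflower in $S$, as required.

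I expect the main obstacle to be conceptual rather than computational: one must test the $\Pi^0_2$ property ``$G_r$ is infinite'' for a \emph{deterministic} object instead of trying to locate a good core directly, because deciding whether a core is good --- equivalently whether some element lies in infinitely many members of $S_r$ --- is a $\Sigma^0_3(S)$ question that $\FinSetComp[S] \TuringEq S''$ cannot answer. The only remaining care is verifying that individual petals of $G_r$ are computable, which holds precisely because the certification that $G_r$ is infinite guarantees each petal-search halts.
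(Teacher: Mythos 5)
Your proof is correct, and although its combinatorial skeleton is the one the paper uses --- locate a finite ``core'' $r$ that is contained in infinitely many members of $S$ but has no proper extension with that property, then greedily pick petals meeting pairwise only in $r$ --- your handling of the oracle is genuinely different, and in fact more careful. The paper reaches the same core by induction on $n$, splitting at each stage on whether some $a$ lies in infinitely many members and asserting that this case distinction can be identified from $\FinSetComp[S]$; as you observe, that question is $\Sigma^0_3(S)$ in general (for computable $S \subseteq \Powerset_{2}(\w)$ it is the $\Sigma^0_3$-complete question of whether a computable graph has a vertex of infinite degree), so it is not literally decidable from $\FinSetComp[S] \TuringEq S''$. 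Your workaround --- never decide whether a candidate core is good, only test the $\Pi^0_2(S)$ statement ``the deterministic greedy sunflower $G_r$ is infinite'' as a single membership query $q_r \in \FinSetComp[S]$, and run this test along an enumeration of all finite $r$, a search guaranteed to halt because a good core exists and good cores yield infinite $G_r$ --- is exactly the repair that step needs, and it keeps the whole construction uniform in the index, as the ``furthermore'' clause requires. What the paper's induction buys is brevity and a clean reduction of the $(n+1)$-case to the $n$-case; what your version buys is that every appeal to the oracle is an explicit $\Sigma^0_2(S)$ or $\Pi^0_2(S)$ query, so the claimed bound is actually justified. The supporting details you flag (petal searches halt once $G_r$ is certified infinite; membership in $G_r$ is decided by running the greedy process until a petal of larger code appears) all check out.
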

\begin{proof}
For $A \in \Powerset_{<\w}(\w)$ let $S_A = \{s \in S \st A \subseteq s\}$. We prove the result by induction on $n$. First note that if $n = 1$ then $S$ is a sunflower and so we are done.  

Now suppose the theorem is true for $n$ and $S \subseteq \Powerset_{ n+1}(\w)$. We break into two cases. \nl\nl
\ul{Case 1:} $(\exists a \in \w)|S_{\{a\}}| = \w$ \nl
Let $S^* = \{s\setminus \{a\} \st s  \in S_{\{a\}}\}$. Then $S^* \subseteq \Powerset_{n}(\w)$. We can therefore find an infinite sunflower $S_0^* \subseteq S^*$ which is computable in $\FinSetComp[S^*]$. But $S^*$ is computable from $S$ and so $\FinSetComp[S^*]$ is computable from $\FinSetComp[S]$. Therefore $S_0^*$ is also computable from $\FinSetComp[S]$. Let $S_0 = \{s \cup \{a\} \st s \in S_0^*\}$. Then $S_0$ is the desired infinite sunflower. \nl\nl
\ul{Case 2:} $(\forall a \in \w)|S_{\{a\}}| < \w$ \nl
For any finite $Q_0 \subseteq S$ there are only finitely many $X \in S$ such that $X \cap \bigcup Q_0 \neq \emptyset$. Let $I(Q_0)$ be the collection of all such $X$. Note that $I(Q_0)$ is uniformly computable in $\TuringJump[S]$ from $Q_0$ and $\TuringJump[S]$ is computable from $\FinSetComp[S]$. 

We now define our sunflower $S_0 = \{X_i\}_{i \in \w}$ by induction. First let $X_0 \in S$ be arbitrary. Then, for $k \in \w$, let $X_{k+1} \in S \setminus I(\{X_i\}_{i \in [k+1]})$. We then have for any $i < j \in \w$ that $X_i \cap X_j = \emptyset$ and so $S_0$ is an infinite sunflower. Further $S_0$ is computable from $\FinSetComp[S]$. \nl\nl
Finally note that we can identify from $\FinSetComp[X]$ which case we are in. Therefore we can compute $S_0$ from $\FinSetComp[S]$. 
\end{proof}

%%%%  %%%%  %%%%  %%%%  %%%%  %%%%  %%%%  %%%%
\section{Non-Constant Set Size}
%%%%  %%%%  %%%%  %%%%  %%%%  %%%%  %%%%  %%%%
We now consider the case when the size of the finite sets can be unbounded. The following definition will be important.

\begin{definition}
Suppose $S \subseteq \Powerset_{<\w}(\w)$. Let $T_{n, k}(S)$ be collection of finite sunflowers $S_0 \subseteq S$ such that $\min \{|s| \st s \in S_0\} \leq n$ and $\max \{|s| \st s \in S_0\} \leq n+k$. 

We then let $T_{n}(S)$ be the tree with ordering inclusion where level $k$ of $T_n(S) = T_{n, k}(S)$. We call this the \defn{$n$-th sunflower tree} of $S$. 
\end{definition}

Note that $T_n(P)$ is not simply the collection of sunflowers that contain an element of size $n$ ordered by inclusion, i.e. the elements of level $k$ are not all sunflowers of size $k$. \cref{Sunflowers on sets of the same size} tells us that, when trying to identify those collections $P$ with an infinite sunflower the only interesting case is when for each $n$ there are only finitely many sets of size $n$. As we will see, in this case we want to construct for each $n$, a tree that is finitely branching whose elements are sunflowers with an element of size $n$. However, if we were to consider the tree where the sunflowers at level $k$ were those of size $k$ then the trees would be infinitely branching. 

\begin{theorem}
\label{Sunflowers on sets of different sizes}
Suppose $X \subseteq \w$. There is a map $i\:\w \to \w$ which is computable in $\FinSetSeqComp[X]$ such that 
\begin{itemize}
\item[(a)] if $\{e\}^X$ does not encode an infinite subset of $\Powerset_{<\w}(\w)$ then $i(e) = 0$, 

\item[(b)] if $\{e\}^X$ encodes an infinite subset $\Powerset_{<\w}(\w)$ which does not contain a sunflower then $i(e) = 1$, 

\item[(c)] if $\{e\}^X$ encodes an infinite subset $S \subseteq \Powerset_{<\w}(\w)$ with a sunflower then $i(e) = e^* + 2$ where $\{e^*\}^{\FinSetSeqComp(X)}$ encodes a countable sunflower $S_0 \subseteq S$. 

\end{itemize}
\end{theorem}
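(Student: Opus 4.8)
The plan is to combine the constant‑size result \cref{Sunflowers on sets of the same size} with a König's Lemma argument on the sunflower trees, and to package the entire decision as queries to $\FinSetComp[X]$ and $\FinSetSeqComp[X]$ that are uniform in $e$. Throughout I assume the fixed coding of $\Powerset_{<\w}(\w)$ is total, so that $\{e\}^X$ always encodes its range $S \defas \range(\{e\}^X)$; then ``$\{e\}^X$ fails to encode an infinite subset'' is literally ``$S$ is finite'', i.e.\ $e \in \FinSetComp[X]$. The combinatorial heart is the characterization: \emph{an infinite $S$ contains a sunflower if and only if some tree $T_n(S)$ is infinite.} For the forward direction, suppose $S$ has an infinite sunflower $S_0$. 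If some slice $S_m \defas S \cap \Powerset_{m}(\w)$ is infinite, then by \cref{Sunflowers on sets of the same size} it already contains an infinite sunflower, all of whose finite subsets are nodes at level $0$ of $T_m(S)$, so $T_m(S)$ is infinite. Otherwise every slice is finite, so $S_0$ meets infinitely many slices; taking $n=\min\{\,|s|:s\in S_0\,\}$, the finite initial sub‑sunflowers of $S_0$ of growing maximal size are nodes of $T_n(S)$ at every level. For the converse, if $T_n(S)$ is infinite and some relevant slice is infinite we are done by \cref{Sunflowers on sets of the same size}; if every slice is finite then each size occurs only finitely often, so $T_n(S)$ is finitely branching, and by König's Lemma it has an infinite branch $\sigma_0\subsetneq\sigma_1\subsetneq\cdots$. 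Once $\sigma_i$ has two elements its core $r=p\cap q$ is pinned down and must remain the common pairwise intersection of every larger node, so $\bigcup_i\sigma_i$ is an infinite sunflower contained in $S$.

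By the $s$-$m$-$n$ theorem there is, uniformly in $e$, an index $P(e)$ with $\{P(e)\}^X(n)$ an index $p_{e,n}$ for a program enumerating the nodes of $T_n(S)$; then $\range(\{p_{e,n}\}^X)$ is finite exactly when $T_n(S)$ is finite, so $P(e)\in\FinSetSeqComp[X]$ if and only if every $T_n(S)$ is finite, i.e.\ (for infinite $S$) if and only if $S$ contains no sunflower. Since $\FinSetComp[X]\TuringLeq\FinSetSeqComp[X]$, the map $i$ is computed from $\FinSetSeqComp[X]$ thus: query whether $e\in\FinSetComp[X]$ and set $i(e)=0$ if so; otherwise query whether $P(e)\in\FinSetSeqComp[X]$ and set $i(e)=1$ if so; in the remaining case set $i(e)=e^*+2$ for the index $e^*$ built below. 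This gives totality of $i$, its computability in $\FinSetSeqComp[X]$, and clauses (a) and (b).

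For clause (c) the program $\{e^*\}^{\FinSetSeqComp[X]}$ first recovers $X$ and $\FinSetComp[X]$ (hence $\TuringJump[X]$) from its oracle, then finds the least $n$ with $T_n(S)$ infinite, each such question being a $\FinSetComp[X]$‑query about some $p_{e,n}$. A further $\FinSetSeqComp[X]$‑query to the analogous index $h(e)$, whose values enumerate the slices $S_m$, decides whether every slice is finite. If so, $T_n(S)$ is finitely branching and we extract a branch: at a node $\sigma$ we test each of its finitely many children $\tau$ for \emph{having an infinite subtree} (a $\FinSetComp[X]$‑query about the enumerator of the nodes above $\tau$) and descend into the first witness, outputting the union of the branch as the encoded sunflower. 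If instead some slice is infinite, we take the least such $m$ and feed an encoding of $\FinSetComp[S_m]$ to the uniform index supplied by the ``furthermore'' clause of \cref{Sunflowers on sets of the same size}, outputting the resulting sunflower. Each index here is produced uniformly, so $e^*$ is obtained from $e$ uniformly.

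The main obstacle is verifying that the extracted sunflower is genuinely computable in $\FinSetSeqComp[X]$, and the only delicate point is the slice‑infinite branch: \cref{Sunflowers on sets of the same size} yields a sunflower computable in $\FinSetComp[S_m]$, and since $S_m$ is merely c.e.\ in $X$ (so $S_m\TuringLeq\TuringJump[X]$) the set $\FinSetComp[S_m]$ sits at the $\Sigma^0_3(X)$ level. Reducing it to $\FinSetSeqComp[X]$ therefore rests on the fact that $\FinSetSeqComp[X]$ is $\Pi^0_3(X)$‑complete, equivalently $\FinSetSeqComp[X]\TuringEq X'''$, which is precisely the computational strength encoded by its definition as ``every converging output names a finite‑range program''. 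By contrast, the all‑slices‑finite branch and the navigation of the finitely branching tree use only $\FinSetComp[X]\TuringEq X''$. The remaining work is bookkeeping: checking via the $s$-$m$-$n$ theorem that every index ($P(e)$, $h(e)$, the node‑ and subtree‑enumerators, and $e^*$) is produced uniformly in $e$, and that the total‑coding convention makes clause (a) coincide with $e\in\FinSetComp[X]$.
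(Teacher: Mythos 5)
Your proposal is correct and follows essentially the same route as the paper: decide clause (a) via $\FinSetComp[X]$, split on whether some slice $S\cap\Powerset_{n}(\w)$ is infinite, handle the infinite-slice case with \cref{Sunflowers on sets of the same size}, and otherwise run K\"onig's Lemma on the finitely branching trees $T_n(S)$, deciding their finiteness with $\FinSetSeqComp[X]$ and extracting a branch with $\FinSetComp[X]$-queries. Your explicit appeal to the $\Pi^0_3(X)$-completeness of $\FinSetSeqComp[X]$ in the infinite-slice case makes precise a step the paper leaves implicit, but this is a refinement of, not a departure from, the paper's argument.
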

\begin{proof}
Suppose $\{e\}^X$ encodes $S \subseteq \Powerset_{<\w}(\w)$. If $S$ is a finite subset we can determine this from $\FinSetComp[X]$. Therefore we can determine from $\FinSetComp[X]$ whether or not $i(e) = 0$. 

Note there is an $r$ such that for all $n \in \w$, $\{r\}^S(n)$ encodes $S \cap \Powerset_{n}(\w)$. We now break into cases, where which case we are in can be identified from $\FinSetSeqComp[X]$. \nl\nl
\ul{Case 1:} $(\exists n \in \w)\, S \cap \Powerset_{n}(\w)$ is infinite. \nl
From $\FinSetComp[X]$ we can find one such $n$. We then have $i(e) > 1$ and we can use \cref{Sunflowers on sets of the same size} to find $e^*$ which is a code for a sunflower contained in $S$.\nl\nl 
\ul{Case 2:} $(\forall n \in \w)\, S \cap \Powerset_{n}(\w)$ is finite.\nl
We then have the following. 
\begin{itemize}
\item $\{T_n(S)\}_{n \in \w}$ is computable from $S$, 

\item $T_n(S)$ is finitely branching, as there are only finitely many elements of $P$ with sizes in $[n, n+k]$,

\item if $(Z_i)_{i \in \w}$ is an infinite branch of $T_n(S)$ then $\bigcup_{i \in \w} 
Z_i$ is an infinite sunflower contained in $S$, 

\item if $\{Y_i\}_{i \in \w}$ is an infinite sunflower contained in $S$ and $n = \inf\{|Y_i|\}_{i \in \w}$ then there is an infinite branch $(Z_i)_{i \in \w}$ in $T_n(S)$ such that $\bigcup_{i \in \w} Z_i = \{Y_i\}_{i \in \w}$, 
\end{itemize}

In particular $S$ contains an infinite sunflower if and only if for some $n \in \w$, $T_n(S)$ is infinite. But we can determine this from $\FinSetSeqComp[X]$. Therefore we can determine from $\FinSetSeqComp[X]$ whether or not $i(e) = 1$. 

Further if $S$ does contain a sunflower, i.e. $i(e) > 1$, we can find a $T_n(S)$ which is infinite from $\FinSetComp[X]$. Then we can construct an infinite branch in $T_n(S)$ from $\FinSetComp[X]$ as an element of $T_n(S)$ is part of an infinite branch if and only if it has infinitely many children (as $T_n(S)$ is finitely branching). From such an infinite branch we then can construct our desired infinite sunflower. All of this can be done from $\FinSetComp[X]$ and so we can find an $e^*$ such that $\{e^*\}^{\FinSetSeqComp[X]}$ encodes an infinite sunflower. We then let $i(e) = e^* + 2$. 
\end{proof}

\begin{corollary} 
Suppose $S\subseteq \Powerset_{<\w}(\w)$. Then the following are equivalent. 
\begin{itemize}
\item[(a)] for some $n \in \w$, $T_n(S)$ is infinite, 

\item[(b)] $S$ contains an infinite sunflower. 

\end{itemize}
\end{corollary}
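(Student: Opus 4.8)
The plan is to prove the two implications separately, observing that the combinatorial content is essentially already isolated inside the proof of \cref{Sunflowers on sets of different sizes}; what remains is to assemble those observations into a case analysis that also covers the situation excluded there (when some $S \cap \Powerset_{m}(\w)$ is infinite). The two tools I would lean on are König's Lemma for finitely branching trees and \cref{Sunflowers on sets of the same size}.

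For the direction (b) $\Rightarrow$ (a), I would start from an infinite sunflower $\{Y_i\}_{i \in \w} \subseteq S$ and set $n = \min\{|Y_i| \st i \in \w\}$, which is achieved since the sizes are natural numbers; fix an index $i_0$ with $|Y_{i_0}| = n$. For each $m \in \w$, the union of $\{Y_{i_0}\}$ with any $m$ further petals $Y_i$ is a finite subset of the sunflower, hence itself a sunflower, whose minimum element size is at most $n$ and whose maximum element size is some $n + k$; it is therefore a node of $T_n(S)$. Letting $m$ range over $\w$ produces infinitely many distinct nodes, so $T_n(S)$ is infinite.

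For the direction (a) $\Rightarrow$ (b), suppose $T_n(S)$ is infinite for some fixed $n$, and split on the sizes occurring in $S$. If some $S \cap \Powerset_{m}(\w)$ is infinite, then \cref{Sunflowers on sets of the same size} already produces an infinite sunflower inside $S \cap \Powerset_{m}(\w) \subseteq S$, and we are done. Otherwise every $S \cap \Powerset_{m}(\w)$ is finite, and I would argue that $T_n(S)$ is then finitely branching: the children of a level-$k$ node are obtained by adjoining a single set of $S$ of size at most $n + k + 1$, and there are only finitely many such candidates since $\bigcup_{m \leq n+k+1} (S \cap \Powerset_{m}(\w))$ is finite. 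Applying König's Lemma to the infinite, finitely branching tree $T_n(S)$ yields an infinite branch $(Z_i)_{i \in \w}$, and, exactly as recorded in the proof of \cref{Sunflowers on sets of different sizes}, the union $\bigcup_{i \in \w} Z_i$ is an infinite sunflower contained in $S$.

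The main obstacle is the interaction between the hypothesis that $S$ is arbitrary and the applicability of König's Lemma: the Lemma only applies once we know $T_n(S)$ is finitely branching, which can fail precisely when $S$ contains infinitely many sets of a common size. I would handle this cleanly by the case split above, so that the potentially infinitely branching situation is never fed to König's Lemma but is instead absorbed by \cref{Sunflowers on sets of the same size}. A minor point to verify carefully is that a singleton ``sunflower'' and the bookkeeping of the common core $r$ behave as expected, but these do not affect the structure of the argument.
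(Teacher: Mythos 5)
Your proof is correct and follows essentially the same route as the paper's: the key case split on whether some $S \cap \Powerset_{m}(\w)$ is infinite, with \cref{Sunflowers on sets of the same size} absorbing the infinitely-branching case and K\"onig's Lemma handling the finitely branching one. Your (b) $\Rightarrow$ (a) direction is organized slightly more directly (no case split needed there), but the content matches the paper's argument.
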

\begin{proof}
We break into two cases. \nl\nl
\ul{Case 1:} For some $n \in \w$ we have $|S \cap \Powerset_{n}(\w)| = \w$. \nl
In this case $T_n(S)$ is infinite and so (a) holds. Further, by \cref{Sunflowers on sets of the same size} we have (b) holds. \nl\nl
\ul{Case 2:} For all $n \in \w$ we have $|S \cap \Powerset_{n}(\w)| <  \w$. \nl
In this case for all $n \in \w$ we have $T_n(S)$ is finitely branching for all $n \in \w$. Therefore (a) holds if and only if for some $n$ there is an infinite path through $T_n(S)$. But there is an infinite path through $T_n(S)$ if and only if there is an infinite sunflower $S_0 \subseteq S$ with $\min \{|s|\st s \in S_0\} \leq n$. Therefore (a) implies (b). 

Finally suppose (b) holds. Suppose $S_0 \subseteq S$ is an infinite sunflower and let $n = \min \{|s| \st s \in S_0\}$. Then $S_0$ gives rise to an infinite path though $T_n(S)$. But this implies $T_n(S)$ is infinite and so (a) holds. 
\end{proof}

%%%%  %%%%  %%%%  %%%%  %%%%  %%%%  %%%%  %%%%
\section{Tightness}
%%%%  %%%%  %%%%  %%%%  %%%%  %%%%  %%%%  %%%%

We have shown how to determine from $\FinSetSeqComp[X]$ whether or not a set computable in $X$ has an infinite sunflower. We now show the converse. 

\begin{definition}
Suppose $X \subseteq \w$. Let $$\SunflowerComp[X] = \left\{e \st \{e\}^X\text{ encodes an infinite set containing an infinite sunflower}\right\}.$$ 
\end{definition}

\begin{theorem}
For $X \subseteq \w$, $\SunflowerComp[X] \TuringEq \FinSetSeqComp[X]$. Further for each direction there is a single program which is independent of $X$ which witnesses the reduction. 
\end{theorem}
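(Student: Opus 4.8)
The plan is to establish the two reductions separately and to check that each is witnessed by a single oracle program independent of $X$. The direction $\SunflowerComp[X] \TuringLeq \FinSetSeqComp[X]$ is essentially already done: by \cref{Sunflowers on sets of different sizes} there is an index $j$, independent of $X$, such that $\{j\}^{\FinSetSeqComp[X]}$ computes the map $i$ described there. Since $e \in \SunflowerComp[X]$ exactly when $\{e\}^X$ encodes an infinite set containing an infinite sunflower, i.e.\ exactly when $i(e) \geq 2$, the program that computes $i(e)$ from the oracle $\FinSetSeqComp[X]$ and returns whether $i(e) \geq 2$ is the desired uniform reduction.

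For the converse $\FinSetSeqComp[X] \TuringLeq \SunflowerComp[X]$ I would build, by the $s$-$m$-$n$ theorem, a total computable function $g$ (independent of $X$) so that for every $e$ the set $S_e$ encoded by $\{g(e)\}^X$ contains an infinite sunflower if and only if $e \notin \FinSetSeqComp[X]$; since Turing reductions are closed under complement this suffices, and the witnessing program is ``compute $g(e)$, query $\SunflowerComp[X]$ at $g(e)$, and negate.'' Recall that $e \notin \FinSetSeqComp[X]$ means there is an $n$ with $m_n := \{e\}^X(n)\!\downarrow$ and $R_n := \range(\{m_n\}^X)$ infinite. To detect this via a sunflower I would dovetail, with oracle $X$, all the computations $\{e\}^X(n)$ and $\{m_n\}^X(\pars)$, and organize the output into blocks: working in the universe $\w \times \{0\} \cup \w \times \{1\}$ (identified with $\w$ by a fixed computable bijection, as in the encoding conventions), for the $j$-th distinct value discovered in $R_n$ I emit the finite set
\[
A_{n,j} \;=\; ([n] \times \{0\}) \;\cup\; \{(\langle n, j\rangle, 1)\}.
\]
Thus block $n$ contributes exactly $|R_n|$ sets, which is infinite precisely when $R_n$ is.

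The verification splits into a within-block and a cross-block computation. Within a fixed block the sets $A_{n,j}$ all share the core $[n] \times \{0\}$ and have pairwise disjoint one-point petals, so block $n$ is a sunflower, and it is infinite exactly when $R_n$ is; this handles the case $e \notin \FinSetSeqComp[X]$. For the cross-block direction, for $n_1 < n_2$ one computes $A_{n_1, j_1} \cap A_{n_2, j_2} = [n_1] \times \{0\}$ (the petals are distinct points and the tags are nested initial segments), so any three sets drawn from three distinct blocks $n_1 < n_2 < n_3$ have pairwise intersections $[n_1]\times\{0\}$, $[n_1]\times\{0\}$, and $[n_2]\times\{0\}$, which are not all equal; hence no three sets from distinct blocks form a sunflower. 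Consequently a sunflower in $S_e$ can meet at most two blocks, so an infinite sunflower forces a single block to be infinite, i.e.\ forces some $R_n$ to be infinite; and when every $R_n$ is finite, $S_e$ is either finite or an infinite set with no infinite sunflower, so in either subcase $g(e) \notin \SunflowerComp[X]$. This gives the equivalence.

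The one genuinely delicate point, and the step I would write out most carefully, is this staircase argument: the tags $[n]\times\{0\}$ must live on a track disjoint from the petals and must be \emph{nested} initial segments, precisely so that within-block intersections are constant while the pairwise intersections of any three cross-block sets cannot simultaneously agree. Arranging the encoding so that the infiniteness of an individual block and the failure of every cross-block triple are guaranteed at the same time is where the construction must be checked most carefully; the remaining bookkeeping (uniformity of $g$, finiteness of each $A_{n,j}$, and computability of membership and size from the encoding) is routine.
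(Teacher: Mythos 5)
Your proposal is correct and is essentially the paper's own argument: the forward direction is read off from \cref{Sunflowers on sets of different sizes}, and the reverse direction uses the same gadget of nested ``staircase'' cores indexed by $n$ with singleton petals tracking the range of $\{\{e\}^X(n)\}^X$ (your $A_{n,j}$ is the paper's $E^*(n,m)$ up to relabeling petals by discovery order rather than by value, and your handling of the possibly-finite $S_e$ replaces the paper's device of always including the bare cores $E(n,0)$).
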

\begin{proof}
That $\SunflowerComp[X] \TuringLeq \FinSetSeqComp[X]$ follows from \cref{Sunflowers on sets of different sizes}. 

We now show how to compute $\FinSetSeqComp[X]$ from $\SunflowerComp[X]$. Suppose $e \in \w$. For $n, m \in \w$ let 
\[
E(n, m) = \{(0, i)\}_{i \in [n+1]} \cup \{(m, n)\}
\]
and let $E^*(n, m) = E(n, \{\{e\}^X(n)\}^X(m))$ when $\{\{e\}^X(n)\}^X(m) \downarrow$.  Let $B_e \subseteq \Powerset_{< \w}(\w \times \w)$ consist of all sets of the form $E(n, 0)$ and $E^*(n, m)$ where $\{\{e\}^X(n)\}^X(m) \downarrow$. Note uniformly in $X$ and $e$ we can computably find an $e^*$ so that $\{e^*\}^X$ encodes $B_e$. It therefore suffices to prove the following claim.

\begin{claim}
For $e \in \w$, $e \in \FinSetSeqComp[X]$ if and only if $B_e$ does not have an infinite sunflower. 
\end{claim}
\begin{proof}
If $e \not \in \FinSetSeqComp[X]$ then for some $n$, $\{e\}^X(n) \downarrow$ and $\{e\}^X(n)$ has infinite range. But then there are infinitely many sets of the form $E^*(n,m)$ in $B_e$ which collectively form a sunflower.  

Now suppose $e \in \FinSetSeqComp[X]$ to show $B_e$ has no infinite sunflower. Note for any $E(n, m), E(n', m') \in B_e$ we have 
\[
E(n, m) \cap E(n', m') = E(\min\{n,n'\}, 0).
\]
Now suppose $B^* \subseteq B_e$ is a sunflower. Then there is an $r \in \w$ such that for all $E(n_0, m_0), E(n_1, m_1) \in B^*$, $\min\{n_0, n_1\} = r$. But this implies that $|\{E(n, m) \in B_e \st n \neq r\}| = 1$. Therefore $|B^*| \leq |\range(\{\{e\}^X(n)\}^X)| + 1$ which is finite. 
\end{proof}
\end{proof}

\bibliographystyle{amsnomr}
\bibliography{bibliography}

\end{document}